\newdimen\myMargin
\newcommand{\A}{{\cal A}}
\def\B{{\cal B}}
\def\Ha{{\cal H}}
\newcommand{\Q}{\mathbb Q}
\newcommand{\F}{\mathbb F}
\newtheorem{fact}{Fact}
\newtheorem{theorem}{Theorem}
\newtheorem{corollary}[theorem]{Corollary}
\newtheorem{proposition}[theorem]{Proposition}
\newtheorem{definition}[theorem]{Definition}
\newtheorem{algorithm}{Algorithm}
\theoremstyle{remark}\newtheorem{example}[theorem]{Example}
\theoremstyle{remark}\newtheorem{remark}[theorem]{Remark}
\title{Splitting quaternion algebras over quadratic number fields}
\author{\normalsize
 \begin{minipage}{0.3\linewidth}
    \large
    P\'eter Kutas \\
    \footnotesize
Institute for Computer Science and Control,
Hungarian Acad. Sci. and
Department of Mathematics and its Applications,
Central European University
    \texttt{Kutas\_Peter@phd.ceu.edu} \\
    \normalsize
  \end{minipage}
  }
\begin{document}
\maketitle
\begin{abstract}
We propose an algorithm for finding zero divisors in quaternion algebras
over quadratic number fields, or equivalently, solving homogeneous quadratic equations
in three variables over $\mathbb{Q}(\sqrt{d})$ where $d$ is a square-free integer. The algorithm
is randomized and runs in polynomial time if one is allowed to call oracles for
factoring integers.

\end{abstract}

\bigskip
\noindent
{\textbf{ Keywords}:} Explicit isomorphism, Full matrix algebra,
Quadratic form, Quaternion algebra, Quadratic number field,
Polynomial-time algorithm.

\bigskip
\noindent
{\textbf{Mathematics Subject Classification:} 68W30, 16Z05, 11D09

\section{Introduction}

In this note we consider the following algorithmic problem which we call explicit isomorphism problem: let $K$ be a field and let $\A$ be a $K$-algebra isomorphic to $M_n(K)$ given by a collection of structure constants (i.e. via its regular representation). The task is to construct an explicit isomorphism between $\A$ and $M_n(K)$ or, equivalently, to find a primitive idempotent in $\A$.

Although the problem comes from computational representation theory, it has various applications in computational algebraic geometry and number theory as well. The case where $K=\mathbb{Q}$, has connections with explicit $n$-descent on elliptic curves \cite{Cremona3}, solving norm equations \cite{IRS} and parametrizing Severi-Brauer surfaces \cite{de Graaf}. In \cite{IKR} we consider the case where $K=\mathbb{F}_q(t)$ which is connected to the factorization problem in a certain skew-polynomial ring \cite{GZ03},\cite{GTLN}.

Ivanyos, R\'onyai and Schicho proposed an ff-algorithm for the case where $K$ is an algebraic number field \cite{IRS}. An ff-algorithm is allowed to call an oracle for factoring an integer or polynomial over a finite field at a cost of the size of the input to the oracle call. An ff-algorithm can also be turned into a randomized polynomial-time algorithm (of Las Vegas type) which is allowed to call an oracle for factoring integers (as a polynomial over a finite field can be factored by a randomized polynomial-time algorithm \cite{Berlekamp}). It is natural to consider ff-algorithms for this task as Rónyai showed that the problem of computing an explicit isomorphism between $\A$ and $M_2(\Q)$ is at least as hard as factoring integers \cite{Ronyai}. The algorithm from \cite{IRS} however depends exponentially on the degree of the number field, the dimension of the matrix algebra and the logarithm of the discriminant of the number field. This algorithm was improved in \cite{ILR}.

The first result for number fields with non-bounded discriminant is contained in \cite{MACIS}. In that extended abstract the following problem is addressed. Let $K=\mathbb{Q}(\sqrt{d})$ and let $\A$ be isomorphic to $M_2(\mathbb{Q}(\sqrt{d}))$ given by structure constants. Then a randomized polynomial-time algorithm which is allowed to call oracles for factoring integers is proposed for finding a quaternion $\Q$-subalgebra $\B$ of $\A$. This does not solve the explicit isomorphism problem, as it may occur that $\B$ is a division algebra and therefore contains no zero divisors. Thus the problem of finding a zero divisor in $\A$ was left open. 

This note is a completion of \cite{MACIS}. We propose a randomized polynomial-time algorithm of Las Vegas type which uses an oracle for integer factorization to compute an explicit isomorphism when $\A\cong M_2(\Q(\sqrt{d}))$. We use the method from \cite{MACIS} to construct a quaternion $\Q$-subalgebra $\B$. The key observation is that $\B$ is split by $\mathbb{Q}(\sqrt{d})$, therefore contains $\mathbb{Q}(\sqrt{d})$ as a subfield. Specifically, it contains an element $s$ which is not in the center of $\A$ and $s^2=d$. Finally in Theorem \ref{main} we show how to find such an element $s$ and output the zero divisor $s-\sqrt{d}$. Note that from a zero divisor $e$ an explicit isomorphism between $\A$ and $M_2(\mathbb{Q}(\sqrt{d}))$ can be constructed by a standard procedure, by considering the left action of $\A$ (the action is multiplication from the left) on the minimal left ideal generated by $e$.

All our main algorithms rely on finding nontrivial zeros of quadratic forms in several variables over $\mathbb{Q}$. In Section 2 we provide a brief summary of the running times of these previously known algorithms and we give a general introduction on quaternion algebras. In Section 3 we describe our main algorithms. Some of the results (Proposition \ref{anticommute}, \ref{square}) are already contained in the extended abstract \cite{MACIS}. We have also implemented the main algorithms in MAGMA \cite{MAGMA}. The program code is available on the author's webpage (\url{https://sites.google.com/site/kutasp89/thesis}) and a description of the implementation can be found in the PhD thesis of the author \cite[Section 6.2.]{Phd}.


\section{Quaternion algebras}
\subsection{General properties}
In this subsection we recall some basic facts about quaternion algebras. All these facts can be found in \cite{Vigneras}.

\begin{definition}
Let $K$ be a field. A central simple algebra $\A$ over $K$ is called a \textbf{quaternion algebra} if it has dimension 4 over $K$.
\end{definition}

A quaternion algebra has a special $K$-basis as stated below:
\begin{proposition}
Let $char(K)\neq 2$ and let $H$ be a quaternion algebra over $K$. Then $H$ has a $K$-basis $1,u,v,uv$ such that $uv=-vu$ and $u^2$ and $v^2$ are in the center of $H$. We call such a basis a quaternion basis of $H$.
\end{proposition}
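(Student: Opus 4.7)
The plan is to construct $u$ and $v$ inside the \emph{pure quaternion} subspace. Since $H$ is central simple of degree $2$ over $K$, every $x \in H$ satisfies its reduced characteristic polynomial $x^2 - \mathrm{Trd}(x)\, x + \mathrm{Nrd}(x) = 0$, and the trace-zero subspace $H_0 = \ker \mathrm{Trd}$ has $K$-dimension $3$. On $H_0$ this relation becomes $x^2 = -\mathrm{Nrd}(x) \in K$, so any $K$-linearly independent pair $u, v \in H_0$ automatically has $u^2, v^2 \in K$. Moreover, for $a, b \in H_0$, expanding $(a + b)^2 \in K$ in two different ways yields the identity $ab + ba = -\mathrm{Brd}(a, b) \in K$, where $\mathrm{Brd}$ is the polarization of $\mathrm{Nrd}$. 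Hence two elements of $H_0$ anticommute exactly when they are orthogonal with respect to $\mathrm{Brd}$.

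The task therefore reduces to producing $K$-linearly independent $u, v \in H_0$ with $\mathrm{Brd}(u, v) = 0$ and $\mathrm{Nrd}(u), \mathrm{Nrd}(v) \neq 0$. First I would rule out $\mathrm{Nrd}|_{H_0} \equiv 0$: if this held, polarization would force $\mathrm{Brd}$ to vanish identically on $H_0 \times H_0$, so every $a, b \in H_0$ would satisfy $a^2 = 0$ and $ab + ba = 0$; a short computation then shows $H_0$ is a nonzero nilpotent two-sided ideal of $H$, contradicting simplicity. Pick $u \in H_0$ with $u^2 \neq 0$. The linear functional $\mathrm{Brd}(u, \cdot)$ restricted to $H_0$ is nonzero (it takes the value $2\,\mathrm{Nrd}(u) \neq 0$ on $u$ itself), so its kernel $W \subseteq H_0$ is $2$-dimensional. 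Because $\mathrm{Nrd}|_{H_0}$ is a nondegenerate ternary form its Witt index is at most $1$, so $W$ cannot be totally isotropic and must contain some $v$ with $\mathrm{Nrd}(v) \neq 0$.

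The linear independence of $\{1, u, v, uv\}$ is then a routine verification. From a relation $a + bu + cv + d(uv) = 0$, applying $\mathrm{Trd}$ (and using $\mathrm{Trd}(uv) = uv + vu = 0$ by anticommutation) gives $a = 0$; right-multiplying the remainder by $u$ and repeating the trace trick, together with $u^2, v^2 \in K^\times$ and $u, v \notin K$, forces $b = c = d = 0$. The main obstacle is the middle step: exhibiting two anisotropic anticommuting vectors in $H_0$. The bookkeeping in the first and third paragraphs is easy, but that existence claim really uses the simplicity of $H$, via the fact that the pure-quaternion subspace cannot be totally isotropic for the reduced norm.
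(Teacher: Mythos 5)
The paper itself gives no proof of this proposition (it only cites Vign\'eras), so your argument has to stand on its own, and its overall architecture is the right one: work in the trace-zero subspace $H_0$, note $x^2=-\mathrm{Nrd}(x)\in K$ there, translate anticommutation into orthogonality for the polar form, and use simplicity to rule out degenerate configurations. The one genuine gap is the sentence ``Because $\mathrm{Nrd}|_{H_0}$ is a nondegenerate ternary form its Witt index is at most $1$.'' What you actually established in the preceding paragraph is only that $\mathrm{Nrd}|_{H_0}$ is not identically zero; nondegeneracy is strictly stronger, and it is exactly the hypothesis the Witt-index bound needs. If the restricted form were degenerate, say of rank $1$ with a $2$-dimensional radical, then for your anisotropic $u$ the subspace $W=\ker\mathrm{Brd}(u,\cdot)\cap H_0$ would be precisely that radical, hence totally isotropic, and no admissible $v$ would exist --- so the step really does fail without the missing input. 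Note also that the usual proof that the pure part of the norm form is nondegenerate diagonalizes it as $\langle -a,-b,ab\rangle$ with respect to a quaternion basis, i.e.\ it uses the very statement you are proving, so you cannot simply quote it.

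The gap is fillable in the same spirit as your ideal argument, in either of two ways. (i) Prove nondegeneracy directly: on $H_0$ one has $\mathrm{Brd}(x,y)=-\mathrm{Trd}(xy)$; the radical of the symmetric bilinear form $(x,y)\mapsto\mathrm{Trd}(xy)$ on $H$ is a two-sided ideal because $\mathrm{Trd}(xy)=\mathrm{Trd}(yx)$, the form is nonzero since $\mathrm{Trd}(1)=2\neq 0$, so simplicity kills the radical; since $K\cdot 1\perp H_0$ and the form is nondegenerate on $K\cdot 1$, it is nondegenerate on $H_0$. (ii) Avoid nondegeneracy altogether: if every $v\in W$ had $v^2=0$, take a basis $v_1,v_2$ of $W$; polarization gives $v_1v_2+v_2v_1=0$, the element $v_1v_2$ lies in $H_0$ and commutes with $u$ while every element of $W$ anticommutes with $u$, so invertibility of $u$ forces $v_1v_2=\lambda u$ with $\lambda^2u^2=(v_1v_2)^2=0$, hence $v_1v_2=v_2v_1=0$; but then $W$ is a $2$-dimensional two-sided ideal of $H=K\oplus Ku\oplus W$, contradicting simplicity. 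With either patch, the rest of your write-up (the identity $ab+ba=-\mathrm{Brd}(a,b)$, the choice of $u$, and the linear-independence verification, which correctly uses $char(K)\neq 2$ and the invertibility of $u$ and $v$) is correct.
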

\begin{remark}
This result is well known, a proof can be found in \cite{Vigneras}. There is a similar presentation if $char(K)=2$, however since we will later only consider algebraic number fields, we omit this statement here.
\end{remark}
From now on we assume that $char(K)\neq 2$. Since the center of $H$ is $K$, we have that $u^2\in K$ and $v^2\in K$ if we identify 1 with the identity element of $H$. This motivates the following notation:
\begin{definition}
Let $H$ be a quaternion algebra over $K$ with quaternion basis $1,u,v,uv$. Let $u^2=\alpha$ and $v^2=\beta$. Note that $\alpha$ and $\beta$ are in $K$. Then we denote $H$ by $H_K(\alpha,\beta)$.
\end{definition}

It is easy to see that this is well-defined, i.e. all quaternion algebras which have a quaternion basis $1,u,v,uv$ such that $u^2=\alpha$ and $v^2=\beta$ are isomorphic.

The Wedderburn-Artin theorem implies that every quaternion algebra is either isomorphic to $M_2(K)$ or is a division algebra over $K$. There is a nice criterion which tells us when a quaternion algebra is split (i.e., is isomorphic to $M_2(K)$). First we recall some definitions.
\begin{definition}
Let $H$ be a quaternion algebra over $K$, with quaternion basis $1,u,v,uv$. Let $s=\lambda_1+\lambda_2u+\lambda_3v+\lambda_4uv$. Then let $\sigma(s)=\lambda_1-\lambda_2u-\lambda_3v-\lambda_4uv$ be the \textbf{conjugate} of $s$. We call $Tr(s)=s+\sigma(s)$ the \textbf{trace} of $s$ and $N(s)=s\sigma(s)$ the \textbf{norm} of $s$. Note that both $Tr(s)$ and $N(s)$ are in $K$.
\end{definition}
\begin{remark}
One can show that the functions $Tr(x)$ and $N(x)$ do not depend on the quaternion basis and coincide with the usual reduced trace and reduced norm (see \cite{Vigneras}).
\end{remark}

\begin{proposition}\label{split}
The following statements are equivalent:
\begin{enumerate}
\item $H_K(\alpha,\beta)\cong M_2(K)$,
\item There exists a nonzero element $s\in H_K(\alpha,\beta)$ such that $N(s)=0$,
\item The quadratic form $x_1^2-\alpha x_2^2-\beta x_3^2+\alpha\beta x_4^2$ is isotropic over $K$,
\item There exists a nonzero element $s\in H_K(\alpha,\beta)$ such that $Tr(s)=0$ and $N(s)=0$,
\item The quadratic form $\alpha x^2+\beta y^2-z^2$ is isotropic over $K$.
\end{enumerate}
\end{proposition}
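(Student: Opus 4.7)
The plan is to set up a short cycle of implications rather than prove each equivalence pairwise. First I would reduce the algebraic statements (2) and (4) to the quadratic-form statements (3) and (5) by an explicit norm computation; then I would use Wedderburn--Artin to tie (1) to (2); and finally I would close the loop by exhibiting a canonical trace-zero nilpotent in $M_2(K)$.

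For the first step, expanding $N(s)=s\sigma(s)$ for $s=\lambda_1+\lambda_2u+\lambda_3v+\lambda_4uv$, every cross term cancels because $uv=-vu$ (which also gives $u\cdot uv+uv\cdot u=\alpha v-\alpha v=0$, and similarly for the $v$-cross term), while the diagonal contributions use $u^2=\alpha$, $v^2=\beta$ and $(uv)^2=-\alpha\beta$. One obtains $N(s)=\lambda_1^2-\alpha\lambda_2^2-\beta\lambda_3^2+\alpha\beta\lambda_4^2$, which is precisely the form in (3); hence (2) $\Leftrightarrow$ (3). Restricting to pure quaternions (i.e.\ $\lambda_1=0$, equivalently $Tr(s)=0$) gives the ternary form $-\alpha\lambda_2^2-\beta\lambda_3^2+\alpha\beta\lambda_4^2$. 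The substitution $(\lambda_2,\lambda_3,\lambda_4)=(\beta y,\alpha x,z)$ and its inverse $(x,y,z)=(\lambda_3/\alpha,\lambda_2/\beta,\lambda_4)$, well-defined because $\alpha,\beta\neq 0$, transport nontrivial zeros of $\alpha x^2+\beta y^2-z^2$ to nontrivial zeros of the ternary form and back; a one-line substitution verifies the identity in both directions, proving (4) $\Leftrightarrow$ (5).

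To connect to (1) I would invoke Wedderburn--Artin: a central simple $K$-algebra of dimension $4$ is either $M_2(K)$ or a division algebra. Given $N(s)=0$ with $s\neq 0$, the equation $s\sigma(s)=0$ combined with $\sigma(s)\neq 0$ (since $\sigma$ is a bijection) shows that $s$ is a proper zero divisor, so $H$ cannot be a division algebra and must be $M_2(K)$; this is (2) $\Rightarrow$ (1). The converse is immediate because any rank-one matrix in $M_2(K)$ has zero reduced norm. Finally, (1) $\Rightarrow$ (4) follows by pulling back the elementary matrix $E_{12}$ from $M_2(K)$: it is nonzero, has trace $0$ and determinant $0$, so its preimage meets the requirements of (4). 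With the trivial (4) $\Rightarrow$ (2), the cycle (4) $\Rightarrow$ (2) $\Leftrightarrow$ (1) $\Rightarrow$ (4) is closed, and together with (2) $\Leftrightarrow$ (3) and (4) $\Leftrightarrow$ (5) this yields all five equivalences.

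The main obstacle, should one try to prove (3) $\Rightarrow$ (5) directly, is the case in which the isotropic vector of the full norm form has both $\lambda_1$ and $\lambda_4$ nonzero; no obvious elementary substitution produces an isotropic vector of the ternary form, and in fact this is essentially the content of the theorem that an isotropic two-fold Pfister form is hyperbolic. The routing above through Wedderburn and the distinguished nilpotent $E_{12}$ bypasses this subtlety altogether, which is why I would order the implications this way.
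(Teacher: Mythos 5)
Your proof is correct, and it is more complete than what the paper actually offers: the paper only sketches the two computational equivalences and defers the rest to Vign\'eras. Specifically, the paper's discussion after the proposition amounts to your first step --- expanding $N(s)$ in the quaternion basis to get (2) $\Leftrightarrow$ (3), restricting to trace-zero elements to get the ternary form, and rescaling variables (using $\alpha\beta\neq 0$) to get (4) $\Leftrightarrow$ (5) --- while the implications involving (1) are simply cited. You supply those missing links yourself: (2) $\Rightarrow$ (1) via Wedderburn--Artin and the observation that $s\sigma(s)=0$ with $s,\sigma(s)\neq 0$ rules out a division algebra, and (1) $\Rightarrow$ (4) by pulling back the nilpotent $E_{12}$, which has reduced trace and norm zero; this is legitimate in the paper's framework because the preceding remark identifies $Tr$ and $N$ with the reduced trace and norm, hence with the matrix trace and determinant under any isomorphism with $M_2(K)$. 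Your choice to close the cycle as (4) $\Rightarrow$ (2) $\Leftrightarrow$ (1) $\Rightarrow$ (4) is also a genuine structural point: it sidesteps the only delicate implication, (3) $\Rightarrow$ (5), whose direct proof is essentially the statement that an isotropic two-fold Pfister form is hyperbolic, and you are right to flag that. Two small points worth making explicit if you write this up: the nondegeneracy $\alpha,\beta\neq 0$ follows from simplicity (if, say, $u^2=0$ then the span of $u$ and $uv$ is a proper two-sided ideal), and the equivalence $Tr(s)=0\Leftrightarrow\lambda_1=0$ uses the standing assumption $\mathrm{char}\,K\neq 2$.
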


\begin{remark}
If we write out condition (2) in terms of the quaternion basis we obtain (3). This shows that if
$$x_1^2-\alpha x_2^2-\beta x_3^2+\alpha\beta x_4^2=0, $$
then $1+x_1u+x_2v+x_3uv$ is a zero divisor (or equivalently has norm zero) in $H_K(\alpha,\beta)$.  
Condition (4) if written out would give the equation $\alpha x_0^2+\beta y_0^2-\alpha\beta z_0^2=0$ (since every element $x$ for which $Tr(x)=0$ is the linear combination of $u,v$ and $uv$). By a change of variables we arrive at (5) ($x:=\frac{y_0}{\alpha},y:=\frac{x_0}{\beta}, z:=z_0$). Thus from a solution to the equation 
$$\alpha x^2+\beta y^2-z^2=0,$$
a zero divsor in $H_K(\alpha,\beta)$ can be obtained by a polynomial-time algorithm (first reverse the change of variables and then apply condition (3) as discussed above). 

Details can be found in \cite{Vigneras} (or \cite{Castel},\cite{Ronyai}). Note that this shows that there is a strong connection between quaternion algebras and quadratic forms in three variables over $K$.
\end{remark}

\subsection{Algorithmic results}

Now we review some algorithmic results concerning quaternion algebras and quadratic forms over $\mathbb{Q}$. Note that we only consider deterministic and randomized polynomial-time algorithms (which may use oracles for factoring integers). In this note every randomized algorithm is of Las Vegas type.  
In this section we consider an algebra to be given as a collection of structure constants, which means the following.
Let $\A$ be an algebra over the field $K$. Let $a_1,\dots, a_m$ be a $K$-basis of $\A$. Then the products of the basis elements can be expressed as the $K$-linear combination of the basis elements:
$$ a_ia_j=\gamma _{ij1}a_1+\gamma _{ij2}a_2+\cdots +\gamma _{ijm}a_m. $$

The $\gamma_{ijk}\in K$ are called structure constants. Note that specifying $\A$ with structure constants is equivalent to giving $\A$ by its regular representation.
\begin{example}
Let $H_K(\alpha,\beta)$ be a quaternion algebra with the quaternion basis $1,u,v,uv$. Then every basis element is given by a $4\times 4$ matrix:
$$
\begin{pmatrix}
		1 & 0 & 0 & 0\\
		0 & 1 & 0 & 0 \\
        0 & 0 & 1 & 0 \\
        0 & 0 & 0 & 1
\end{pmatrix}
\begin{pmatrix}
		0 & 1 & 0 & 0\\
		\alpha & 0 & 0 & 0 \\
        0 & 0 & 0 & 1 \\
        0 & 0 & \alpha & 0
\end{pmatrix}
\begin{pmatrix}
		0 & 0 & 1 & 0\\
		0 & 0 & 0 & -1 \\
        \beta & 0 & 0 & 0 \\
        0 & -\beta & 0 & 0
\end{pmatrix}
\begin{pmatrix}
		0 & 0 & 0 & 1\\
		0 & 0 & -\alpha & 0 \\
        0 & \beta & 0 & 0 \\
        -\alpha\beta & 0 & 0 & 0
\end{pmatrix}.$$

\end{example}
\smallskip

R\'onyai \cite{Ronyai} gave a polynomial-time algorithm for finding a quaternion representation from an arbitrary structure constant representation. Thus we may assume that a quaternion algebra is given by a quaternion basis.

\begin{definition}
Let $\A\cong M_n(K)$ be given by structure constants. The \textbf{explicit isomorphism} problem is to compute an isomorphism between $A$ and $M_n(K)$.
\end{definition}
\begin{remark}\label{rank1}
Finding an explicit isomorphism is equivalent to finding an element $r$ of rank 1 in $\A$. Indeed, the left action of $A$ on the left ideal $Ar$ produces such an isomorphism (the vector space $Ar$ has dimension $n$ and the left action is $K$-linear so every element of $A$ can be represented by an $n\times n$ matrix and this map is an isomorphism).
\end{remark}

R\'onyai showed (\cite{Ronyai}) that there is a randomized polynomial-time reduction from factoring square-free integers to the explicit isomorphism problem in the case $\A\cong M_2(\mathbb{Q})$. Ivanyos and Sz\'ant\'o \cite{IS} proposed a polynomial-time ff-algorithm to solve this problem. They construct a maximal order (using the algorithm from \cite{RLIR}) and use lattice reduction to find a zero divisor. Note that an ff-algorithm can also be thought of as a randomized polynomial-time algorithm which is allowed to call oracles for factoring integers. 

Cremona and Rusin gave a different algorithm \cite{CR} for the same task which runs in polynomial time if one is allowed to call an oracle for integer factorization. They proposed an algorithm which finds nontrivial zeros of quadratic forms in three variables over $\Q$. From this data a zero divisor in $\A$ can be constructed via the description in Proposition \ref{split}. The algorithms from \cite{de Graaf},\cite{Pilnikova} and \cite{IRS} generalize these results to matrix algebras of higher degree.

However, if $K$ is a number field then the algorithms from \cite{IRS} and \cite{IS} run exponentially in the degree and the logarithm of the discriminant of the number field.

In the next section we consider the case where $\A\cong M_2(K)$ where $K$ is a quadratic extension of $\mathbb{Q}$. It turns out that this is related to finding nontrivial zeros of quadratic forms over $\mathbb{Q}$ in several variables. Hence we cite these two results:
\begin{fact}[Simon \cite{Simon}]
There is a randomized polynomial time-algorithm for finding nontrivial zeros (or proving that no such zero exists) of quadratic forms over $\mathbb{Q}$ in dimension at least 4 if one is allowed to call oracles for factoring integers.
\end{fact}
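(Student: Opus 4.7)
The plan is to reduce solving a quaternary form over $\mathbb{Q}$ to, on the one hand, a local solvability check via Hasse--Minkowski, and on the other hand, an effective construction of a rational isotropic vector via an indefinite lattice reduction.

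First I would normalize the form. Given an arbitrary quadratic form in four rational variables, I would diagonalize it by completing the square (which costs polynomially many field operations), clear denominators so that the Gram matrix is integral, and then use the integer factoring oracle to extract the square parts of the four diagonal coefficients. After the substitution $x_i \mapsto x_i/\sqrt{s_i}$, where $s_i^2$ is the largest square divisor of the $i$-th coefficient, one is reduced to $Q(\ve x) = a_1 x_1^2 + a_2 x_2^2 + a_3 x_3^2 + a_4 x_4^2$ with square-free $a_i \in \mathbb{Z}$. The factorization of $2 a_1 a_2 a_3 a_4$ (which is also obtained from the oracle) is the set of ``bad'' primes.

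Second I would decide local solvability. By the Hasse--Minkowski theorem, $Q$ is isotropic over $\mathbb{Q}$ iff it is isotropic over $\mathbb{R}$ (read off the signature) and over $\mathbb{Q}_p$ for every prime $p \mid 2 a_1 a_2 a_3 a_4$. At each such prime the isotropy criterion for a quaternary form reduces to evaluating Hilbert symbols $(a_i, a_j)_p$, which, given the factorization, is a polynomial-time arithmetic task. If a local obstruction is detected, no rational zero exists and the algorithm halts. Otherwise, a rational isotropic vector must exist.

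Third, and this is the core of the algorithm, I would produce a rational isotropic vector explicitly. The idea is an indefinite LLL-style reduction: treat the Gram matrix of $Q$ as the shape of a lattice and perform unimodular reductions that shrink $|Q(\ve b_i)|$ for the basis vectors while controlling the off-diagonal Gram entries. A Minkowski-type bound shows that a reduced basis in dimension four must contain a vector $\ve b$ with $|Q(\ve b)|$ smaller than a bound governed by $|\det Q|^{1/4}$; either $Q(\ve b)=0$ and we are done, or one uses $\ve b$ to split off a hyperbolic plane and recurse on a form of smaller absolute determinant. Repeated alternation of reduction and descent terminates because the absolute value of the determinant strictly decreases in each round, and the total number of rounds is polynomial in the input size.

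The hard part is step three: the Hasse--Minkowski theorem is only an existence statement, and turning it into a polynomial-time algorithm requires both the bit-length bound on a minimal solution and the termination analysis of the indefinite reduction. Unlike ordinary LLL, the Gram form here is not positive definite, so the usual potential-function argument has to be adapted; controlling how the signs of the diagonal entries evolve during reduction, and ensuring that the descent step is always available whenever the form is locally solvable, are the delicate points where one leans on Simon's analysis.
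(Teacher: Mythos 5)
This statement is not proved in the paper at all: it is imported verbatim as a Fact from Simon's preprint, so there is no internal proof to compare against, and your sketch has to be judged as a reconstruction of Simon's algorithm. The first two steps (diagonalization, square-free reduction via the factoring oracle, and the Hasse--Minkowski local solvability test through Hilbert symbols at the primes dividing $2a_1a_2a_3a_4$) are fine and are indeed how such algorithms begin.

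The gap is in your third step, which is exactly the part you yourself flag as ``the core.'' An LLL-style reduction of the Gram matrix acts by unimodular transformations, and these preserve the determinant of the form; so the claim that ``the absolute value of the determinant strictly decreases in each round'' of reduction-plus-descent cannot be right as stated, and the termination argument collapses. In Simon's algorithm the determinant is shrunk by a separate \emph{minimization} step: non-unimodular substitutions at the bad primes (available precisely because the form is locally isotropic there, and requiring the factorization of the determinant) divide out prime factors until $|\det Q|$ is bounded by an absolute constant; only then does indefinite reduction help, because the Minkowski-type bound $|Q(\ve b_1)| \le (4/3)^{3/2}|\det Q|^{1/4}$ forces $Q(\ve b_1)\in\{0,\pm 1\}$, giving either an isotropic vector or a rank-one splitting. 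This minimization is entirely missing from your sketch, and it is also where the factoring oracle is genuinely needed beyond evaluating Hilbert symbols. A second, smaller confusion: if $Q(\ve b)\neq 0$ you cannot ``split off a hyperbolic plane'' --- a hyperbolic plane requires an isotropic vector, which is the very thing being sought; what you can split off is the nondegenerate line $\langle \ve b\rangle$, after which you are left with a ternary problem that must be handled by a dimension-3 algorithm (Cremona--Rusin, Ivanyos--Sz\'ant\'o, or Simon's earlier reduction method), not by recursing on the same quaternary procedure.
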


The paper of Simon \cite{Simon} was presented at the conference "Recent Developments in Computational Number Theory" (\url{http://poncelet.sciences.univ-metz.fr/~soriano/ProgrammeCIRM.pdf}) and is implemented in MAGMA \cite{MAGMA}. 
Finding nontrivial zeros of quadratic forms in 4 variables over $\mathbb{Q}$ is also at least as hard as factoring integers since quadratic forms in dimension 4 with square discriminant correspond to quadratic forms of dimension 3 (see \cite{Castel}).
Castel \cite{Castel} improved these algorithms and obtained an algorithm which works in dimension 5 (and above) and does not depend on factoring integers. However, its running time calculations depend on the validity of the Generalized Riemann Hypothesis (GRH).
\begin{fact}[Castel \cite{Castel}]
Assuming GRH, there is a randomized polynomial-time algorithm which finds a nontrivial zero of an indefinite quadratic form (over $\mathbb{Q}$) in dimension 5 (or more).
\end{fact}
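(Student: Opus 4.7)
My plan is to combine Meyer's theorem (which guarantees that any non-degenerate indefinite rational quadratic form in $n \geq 5$ variables is isotropic) with a reduction to lower dimension that avoids integer factorization. First I would put the form $Q$ into integer form by clearing denominators and apply Simon's indefinite LLL-style reduction, which does not need a factoring oracle, to obtain an equivalent form $Q'$ whose Gram matrix has polynomially bounded entries. Existence of an isotropic vector is then guaranteed; the issue is purely constructive.

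Next I would try to reduce to a sub-form in four variables. For a random four-dimensional rational subspace $W \subset \Q^n$, the restriction $Q'|_W$ is non-degenerate and indefinite with overwhelming probability, hence isotropic by Meyer's theorem, and any isotropic vector in $W$ lifts to one of $Q$. Solving the four-variable problem, however, needs the discriminant of $Q'|_W$ to be factored, which is what I am trying to avoid. I would therefore sample many candidate $W$ and keep only those whose discriminant is a product of primes dividing $\mathrm{disc}(Q')$, a factorization already available from the normalization step (for instance, by writing the discriminant as a product of a small number of integers built from the entries of the reduced Gram matrix). The dimension-four solver of the previous Fact, applied with its factoring oracle fed by these ``free'' primes, then returns an isotropic vector of $Q'|_W$, which I pull back through the LLL transformation to obtain an isotropic vector of the original $Q$.

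The main obstacle is the probabilistic claim in the previous paragraph: I need to show that in dimension at least five, a noticeable fraction of random four-dimensional sub-lattices have a ``tame'' discriminant whose prime factors all lie in the precomputed set associated with $\mathrm{disc}(Q')$. This is exactly where the hypothesis $n \geq 5$ is used, since the extra variable supplies the flexibility needed to cancel unwanted prime contributions in the Gram matrix of $Q'|_W$; for $n=4$ no such freedom exists, which explains why that case genuinely reduces to integer factorization. I expect the argument to rely on a Minkowski- or Chebotarev-type counting estimate over the Grassmannian of four-planes, together with careful use of the bounded coefficients produced by Simon's reduction. The technical heart is quantifying this density of ``tame'' sub-lattices and turning it into an expected polynomial running time, while all other pieces (LLL reduction, the four-variable solver, and the lift) are available off the shelf.
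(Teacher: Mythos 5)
This Fact is not proved in the paper at all: it is cited from Castel's thesis, so the only baseline for comparison is Castel's actual method, and your sketch does not establish the result on its own. The gap is precisely where you locate it yourself: the claim that a noticeable fraction of four-dimensional subspaces $W$ have $\mathrm{disc}(Q'|_W)$ supported only on the primes dividing $\mathrm{disc}(Q')$ is unsupported and, generically, false. If $W$ is spanned by four integer vectors of polynomially bounded size, the Gram determinant of $Q'|_W$ is essentially an arbitrary integer of that size, and integers built exclusively from a fixed finite set of primes are extremely sparse (polylogarithmically many up to $X$); every equidistribution heuristic, and any Minkowski- or Chebotarev-type count you could realistically prove, says the probability of hitting such a ``tame'' discriminant is negligible, not noticeable. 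The appeal to the fifth variable ``cancelling unwanted prime contributions'' is a hope, not an argument, and since this density claim is the technical heart of your plan, the proposal fails as it stands. A secondary problem: even if a tame $W$ were found, Simon's dimension-4 algorithm needs the \emph{complete} factorization of $\mathrm{disc}(Q'|_W)$, so you would also have to certify that no prime outside your precomputed set divides it; a supply of ``free'' primes is not enough.

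Castel's algorithm sidesteps this dead end by spending randomness on a positive-density event rather than a negligible one: roughly, after reduction/minimization the form is decomposed (e.g.\ into a ternary part and a complementary part), and auxiliary values are generated at random so that they are a known small cofactor times a (probable) prime, which can be certified by polynomial-time primality testing; the discriminants of the resulting lower-dimensional subproblems are then known in factored form \emph{by construction}, so the dimension-3/4 machinery applies without a factoring oracle. Hitting primes has positive density by analytic number theory, whereas hitting smooth determinants does not. If you want to salvage your outline, the ``tame sublattice'' sampling step must be replaced by a prime-generation step of this kind; the rest of your scaffolding (Meyer's theorem for existence, indefinite reduction to control coefficient size, lifting solutions back through the change of basis) is consistent with what Castel does.
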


\section{Finding a zero divisor}
In this section we propose an algorithm for finding a zero divisor in $A$ which is isomorphic to $M_2(\mathbb{Q}(\sqrt{d}))$ and is given by structure constants. First we construct a subalgebra $B$ in $A$ which is a quaternion algebra over $\mathbb{Q}$. Then, with this information at our hands, we construct a zero divisor. In Remark \ref{rank1} we saw how to construct an explicit isomorphism from a zero divisor. First we outline the steps of our algorithm:

\smallskip
\begin{algorithm}\label{alg}
\mbox{~}

\begin{enumerate}
\item Find an element $u\in \A$ such that $Tr(u)=0$ and $u^2\in\mathbb{Q}$ and $u\neq 0$.
\item Find a nonzero element $v$ such that $uv=-vu$ and $v^2\in\mathbb{Q}$.
\item Let $B$ be the $\mathbb{Q}$-subspace generated by $1,u,v,uv$. $B$ is a quaternion algebra over $\mathbb{Q}$. Use the algorithm from \cite{IRS} (or \cite{IS}) to either find a zero divisor in $B$ or conclude that $B$ is a division algebra.
\item If $B$ is a division algebra then find an element $s\in B$ such that $s^2=d$. Return $s-\sqrt{d}$.
\end{enumerate}
\end{algorithm}

The key to each step is finding an isotropic vector for a quadratic form in several variables. In Step 1 we solve a homogeneous quadratic equation in 6 variables, in Step 2 and 3 an equation in 3 variables and finally in Step 4 an equation in 4 variables. Step 1,2 and 3 are already exhibited in \cite{MACIS}. Step 4 is the crucial new step which allows us to find a zero divisor in $\A$ and not just a quaternion subalgebra over $\Q$. Now we proceed by providing an algorithm for each step.

\begin{proposition}\label{square}
Let $\A\cong M_2(\mathbb{Q}\sqrt{d})$ be given by structure constants. Then there exists a randomized polynomial-time algorithm which is allowed to call an oracle for integer factorization which finds a nonzero $l\in\A$ for which $Tr(l)=0$ and $l^2\in\mathbb{Q}$.
\end{proposition}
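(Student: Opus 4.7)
The plan is to reformulate the search for a nonzero traceless $l\in\A$ with $l^2\in\mathbb{Q}$ as the solvability of a homogeneous quadratic equation in six rational variables, to show that this equation is isotropic over $\mathbb{Q}$, and then to invoke the algorithms of Simon or Castel cited above to produce an actual solution.

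First, from the given structure constants I would compute the center $Z(\A)$ as the space of elements commuting with every basis vector; this is a two-dimensional $\mathbb{Q}$-subalgebra isomorphic to $\mathbb{Q}(\sqrt{d})$, and I fix a $\mathbb{Q}$-basis $\{1,\omega\}$ of $Z(\A)$ with $\omega^2 = d$. Next I compute the reduced trace $Tr\colon\A\to Z(\A)$ and extract $V = \ker Tr$, a six-dimensional $\mathbb{Q}$-subspace. For any $l\in V$ the degree-two reduced characteristic polynomial yields $l^2 = -N(l)\cdot 1\in Z(\A)$; writing $N(l)=Q_1(l)+Q_2(l)\omega$ produces two $\mathbb{Q}$-quadratic forms $Q_1,Q_2$ on $V\cong\mathbb{Q}^6$ whose coefficients are computable from the structure constants. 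The condition $l^2\in\mathbb{Q}$ is now equivalent to $Q_2(l)=0$, so the task reduces to finding a nontrivial rational zero of $Q_2$.

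The key non-algorithmic point is the isotropy of $Q_2$. I would establish this without constructing the isomorphism $\A\cong M_2(\mathbb{Q}(\sqrt{d}))$: by restriction of scalars, a copy of $M_2(\mathbb{Q})$ embeds as a $\mathbb{Q}$-subalgebra of $M_2(\mathbb{Q}(\sqrt{d}))\cong\A$, and the three-dimensional $\mathbb{Q}$-subspace $V_0\subset V$ of traceless elements of this $M_2(\mathbb{Q})$ satisfies $N(V_0)\subset\mathbb{Q}$, equivalently $Q_2|_{V_0}\equiv 0$. Thus $Q_2$ admits a three-dimensional totally isotropic $\mathbb{Q}$-subspace; in particular $Q_2$ is isotropic, and by a dimension count every four-dimensional subspace of $V$ meets $V_0$ nontrivially, so $Q_2$ remains isotropic on any such subspace.

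To finish, I would apply the quadratic-form solvers from the previous subsection: Castel's randomized algorithm directly to $Q_2$ in six variables, or Simon's ff-algorithm either on $Q_2$ itself or on a chosen four-dimensional subspace of $V$. Lifting the returned isotropic vector back to $V$ gives the desired traceless $l\neq 0$ with $l^2=-Q_1(l)\in\mathbb{Q}$. The only algorithmic caveat is that $Q_2$ could be degenerate, but a nonzero vector in its radical is already an isotropic vector and is produced by linear algebra, so this case reduces to a rank drop. I expect the main obstacle to be the isotropy argument, and the restriction-of-scalars observation makes it essentially free once the right reformulation has been set up.
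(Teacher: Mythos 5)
Your proposal is correct, and it reaches the same reformulation as the paper (the condition $Tr(l)=0$, $l^2\in\mathbb{Q}$ becomes the vanishing of one rational quadratic form in six variables, namely the $\sqrt{d}$-component of $N(l)=-l^2$), but your isotropy argument is genuinely different. The paper first computes a quaternion basis $1,w,w',ww'$ of $\A$ over $\mathbb{Q}(\sqrt{d})$ (via R\'onyai's algorithm), writes the six-variable form explicitly, and proves it is \emph{indefinite} by a case analysis on the signs of the determinants of the three $2\times 2$ blocks of its Gram matrix; indefiniteness in $\geq 5$ variables then gives isotropy. You instead exhibit, non-constructively, a three-dimensional totally isotropic subspace: the traceless part $V_0$ of an embedded copy of $M_2(\mathbb{Q})$, on which the reduced norm is rational. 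Both are valid; yours is cleaner, skips the quaternion-basis step, and the dimension count $\dim(W\cap V_0)\geq 1$ for any four-dimensional $W\subset\ker Tr$ is a nice bonus, since it lets you get by with Simon's dimension-4 solver alone. What you lose is generality: the paper's indefiniteness computation never uses that $\A$ is split, and the paper remarks that the proposition holds for an arbitrary quaternion algebra over $\mathbb{Q}(\sqrt{d})$, whereas your argument leans on the existence of $M_2(\mathbb{Q})\subset\A$ and so only covers the split case (which is all the stated proposition requires). One small caution: the statement asks for an ff-algorithm, i.e.\ deterministic apart from the factoring oracles, so the Castel route you mention is only an aside (as in the paper's Remark \ref{random}); your Simon route (in dimension 6, or on a four-dimensional subspace) is the one that actually proves the claim, and your handling of possible degeneracy by taking a radical vector is fine.
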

\begin{proof}
 First we construct a quaternion basis $1,w,w',ww'$ of $\A$. We have the following:
\begin{equation*}
w^2=r_1+t_1\sqrt{d}, ~w'^2=r_2+t_2\sqrt{d}
\end{equation*}

If $t_1$ or $t_2$ is 0 then $w$ or $w'$ will be a suitable element. If $r_1t_2+r_2t_1=0$ then $(ww')^2\in \mathbb{Q}$ and is traceless. From now on we assume that $t_1,t_2$ and $r_1t_2+r_2t_1$ are nonzero.

Every element whose trace is 0 is in the $\mathbb{Q}(\sqrt{d})$-subspace generated by $w$, $w'$ and $ww'$. The condition $l^2\in\mathbb{Q}$ gives the following equation ($s_1,\dots,s_6\in\mathbb{Q}$):
\begin{equation*}
((s_1+s_2\sqrt{d})w+(s_3+s_4\sqrt{d})w'+(s_5+s_6\sqrt{d})ww')^2\in \mathbb{Q}
\end{equation*}

If we expand this we obtain:
\begin{eqnarray*}
((s_1+s_2\sqrt{d})w+(s_3+s_4\sqrt{d})w'+(s_5+s_6\sqrt{d})ww')^2=\\
(s_1^2+ds_2^2+2s_1s_2\sqrt{d})(r_1+t_1\sqrt{d})+(s_3^2+ds_4^2+2s_3s_4\sqrt{d})(r_2+t_2\sqrt{d})-\\
(s_5^2+ds_6^2+2s_5s_6\sqrt{d})(r_1+t_1\sqrt{d})(r_2+t_2\sqrt{d})
\end{eqnarray*}

In order for this to be in $\mathbb{Q}$ the coefficient of $\sqrt{d}$ has to be zero:
\begin{eqnarray}\label{eq}
t_1s_1^2+t_1ds_2^2+2r_1s_1s_2+t_2s_3^2+t_2ds_4^2+2r_2s_3s_4-(r_1t_2+t_1r_2)s_5^2-\\
(r_1t_2+t_1r_2)ds_6^2-2(r_1r_2+t_1t_2d)s_5s_6=0
\end{eqnarray}

The left hand side of Equation \ref{eq} is a quadratic form in the variables $s_1,\dots,s_6$. This implies that if it is indefinite then it has a solution. The Gram-matrix of the quadratic form is the following:
$$\begin{pmatrix}
t_1& r_1& 0& 0& 0& 0\\
r_1& t_2d&0& 0& 0& 0\\
0& 0& t_2& r_2& 0& 0\\
0& 0& r_2& t_2d& 0& 0\\
0& 0& 0& 0&-(r_1t_2+t_1r_2)&r_1r_2+t_1t_2d\\
0& 0& 0& 0&r_1r_2+t_1t_2d&-(r_1t_2+t_1r_2)d 
\end{pmatrix}$$

It is block diagonal with three $2\times 2$ blocks. The determinant of the first block is $t_1^2d-r_1^2$, the determinant of the second is $t_2^2d-r_2^2$ and the determinant of the third is $(r_1t_2+t_1r_2)^2d-(r_1r_2+t_1t_2d)^2$. Now we show that this quadratic form is always indefinite. If $d<0$ then $t_1^2d-r_1^2<0$ (it is nonzero since $t_1\neq 0$ and $d$ is a square-free integer), hence the form $t_1s_1^2+t_1ds_2^2+2r_1s_1s_2$ is indefinite. If $d>0$ then if either $t_1s_1^2+t_1ds_2^2+2r_1s_1s_2$ or $t_2s_3^2+t_2ds_4^2+2r_2s_3s_4$ is indefinite then we are done. So the remaining case is when $t_1^2d-r_1^2>0$ and $t_2^2d-r_2^2>0$. However, this implies that the quadratic form $-(r_1t_2+t_1r_2)s_5^2-(r_1t_2+t_1r_2)ds_6^2-2(r_1r_2+t_1t_2d)s_5s_6$ is indefinite since
$$(t_1^2d-r_1^2)(t_2^2d-r_2^2)=-((r_1t_2+t_1r_2)^2d-(r_1r_2+t_1t_2d)^2)$$.

Hence we have proven that the quadratic form
\begin{equation}
t_1s_1^2+t_1ds_2^2+2r_1s_1s_2+t_2s_3^2+t_2ds_4^2+2r_2s_3s_4-(r_1t_2+t_1r_2)s_5^2-(r_1t_2+t_1r_2)ds_6^2-2(r_1r_2+t_1t_2d)s_5s_6
\end{equation}
has a nontrivial zero over $\mathbb{Q}$. A nontrivial zero of the quadratic form in (1) can be found by Simon's algorithm \cite{Simon}. This is a randomized polynomial-time algorithm if one is allowed to call an oracle for factoring integers. 
\end{proof}
\begin{remark}
Observe that we only used the fact that $\A$ is a quaternion algebra over $\mathbb{Q}(\sqrt{d})$, we did not need the fact that it is in fact a full matrix algebra.
\end{remark}
\begin{remark} \label{random}
The main tool of this proof was an algorithm for finding nontrivial zeros of quadratic forms in 6 variables. For this task we also could have used Castel's algorithm \cite{Castel}. However, Castel's algorithm is dependent on GRH and we would like to have an algorithm which is independent of the validity of GRH.  
\end{remark}

We proceed to the next step:
\begin{proposition}\label{anticommute}
Let $\B=H_{\mathbb{Q}(\sqrt{d})}(a,b+c\sqrt{d})$ given by: $u^2=a, v^2=b+c\sqrt{d}$, where $a,b,c\in \mathbb{Q}, ~c\neq 0$. Then finding a nonzero element $v'$ such that $uv'+v'u=0$ and $v'^2$ is a rational multiple of the identity is polynomial-time equivalent to finding a zero divisor in the quaternion algebra $H_{\mathbb{Q}}((\frac{b}{c})^2-d,a)$.
\end{proposition}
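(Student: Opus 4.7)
The plan is to parametrize the elements of $\B$ that anticommute with $u$, compute their squares explicitly, and translate the condition $v'^2 \in \mathbb{Q}$ into an isotropy problem for a quadratic form over $\mathbb{Q}$ which turns out to be (after a $\mathbb{Q}$-linear change of variables) precisely the reduced norm form of $H_{\mathbb{Q}}((b/c)^2-d, a)$.

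First, I would identify the space of elements of $\B$ anticommuting with $u$. Since $\mathbb{Q}(\sqrt{d})$ is central and $u$ commutes with $1,u$ but anticommutes with $v,uv$, this space is the $\mathbb{Q}(\sqrt{d})$-subspace spanned by $v$ and $uv$. Hence any candidate $v'$ can be written uniquely as
$$v' = (x+y\sqrt{d})\,v + (z+w\sqrt{d})\,uv, \qquad x,y,z,w\in\mathbb{Q}.$$
Using that central scalars commute, that $v(uv)=(vu)v=-uv^2$ and $(uv)v=uv^2$ (so the cross term vanishes), and that $(uv)^2=-av^2$, one obtains
$$v'^2 = \bigl((x+y\sqrt{d})^2 - a(z+w\sqrt{d})^2\bigr)(b+c\sqrt{d}).$$

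Next, I would extract the rational and $\sqrt{d}$-parts. The coefficient of $\sqrt{d}$ in $v'^2$ is
$$c\bigl(x^2+dy^2-a z^2-adw^2\bigr) + 2b\bigl(xy-azw\bigr),$$
and the requirement $v'^2 \in \mathbb{Q}$ is exactly the vanishing of this expression. Dividing by $c$ and completing the square in the pairs $(x,y)$ and $(z,w)$ via the invertible $\mathbb{Q}$-linear change of variables $X = x+(b/c)y$, $Y=y$, $Z = z+(b/c)w$, $W=w$, this rewrites as
$$X^2 - \Bigl(\tfrac{b^2}{c^2}-d\Bigr) Y^2 - a\,Z^2 + a\Bigl(\tfrac{b^2}{c^2}-d\Bigr) W^2 = 0,$$
which is precisely the reduced norm form of $H_{\mathbb{Q}}((b/c)^2-d,\,a)$ as it appears in condition (3) of Proposition \ref{split}, with $\alpha=(b/c)^2-d$ and $\beta=a$.

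Finally, by Proposition \ref{split}, nontrivial solutions of this equation correspond bijectively to zero divisors of $H_{\mathbb{Q}}((b/c)^2-d,a)$, and hence, via Remark \ref{rank1}, to explicit isomorphisms of the split quaternion algebra with $M_2(\mathbb{Q})$. Since the change of variables is $\mathbb{Q}$-linear, invertible (as $c\neq 0$), and computable in polynomial time in both directions, this yields a polynomial-time reduction in both senses, establishing the claimed equivalence. The main points requiring care are verifying that the cross terms in $v'^2$ cancel and choosing the right substitution to recognise the resulting quadratic form as the standard norm form; once these bookkeeping steps are carried out, the rest is immediate from Proposition \ref{split}.
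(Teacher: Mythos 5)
Your proposal is correct and follows essentially the same route as the paper: parametrize the anticommuting elements as $\mathbb{Q}(\sqrt{d})$-combinations of $v$ and $uv$, set the $\sqrt{d}$-coefficient of the square to zero, divide by $c$, and complete the square to recognise the norm form of $H_{\mathbb{Q}}((b/c)^2-d,a)$, concluding via Proposition \ref{split}. The computations and the change of variables match the paper's proof, so there is nothing to add.
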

\begin{remark}
By polynomial-time equivalent we mean the following. From a zero divisor in $H_{\mathbb{Q}}((\frac{b}{c})^2-d,a)$ a suitable element $v'\in\B$ can be constructed in polynomial time. On the other hand, from a suitable element $v'\in\B$ a zero divisor in $H_{\mathbb{Q}}((\frac{b}{c})^2-d,a)$ can be constructed in polynomial time as well. 
\end{remark}
\begin{proof}
Since $v'$ anticommutes with $u$ (i.e. $uv'+v'u=0$) it must be a $\mathbb{Q}(\sqrt{d})$-linear combination of $v$ and $uv$. This implies we have to search for $s_1,s_2,s_3,s_4\in\mathbb{Q}$ such that:
\begin{equation*}
((s_1+s_2\sqrt{d})v+(s_3+s_4\sqrt{d})uv)^2\in\mathbb{Q}
\end{equation*}

Expanding this expression we obtain the following:
\begin{eqnarray*}
((s_1+s_2\sqrt{d})v+(s_3+s_4\sqrt{d})uv)^2=\\
(s_1^2+s_2^2d+2s_1s_2\sqrt{d})(b+c\sqrt{d})-(s_3^2+s_4^2d+2s_3s_4\sqrt{d})a(b+c\sqrt{d})
\end{eqnarray*}

In order for this to be rational, the coefficient of $\sqrt{d}$ has to be zero. We obtain the following equation:

\begin{equation*}
c(s_1^2+s_2^2d)+2bs_1s_2-ac(s_3^2+s_4^2d)-2abs_3s_4=0
\end{equation*}

First we divide by $c$. Note that $c$ is nonzero. Let $f=b/c$.
\begin{equation}\label{notdiag}
s_1^2+s_2^2d+2fs_1s_2-a(s_3^2+s_4^2d)-2afs_3s_4=0
\end{equation}

First we diagonalize the left hand side of Equation \ref{notdiag}.  Consider the following change of variables: $x:=s_1+fs_2$, $y:=s_2$,$z:=s_3+s_4f$, $w:=s_4$. The transition matrix of this change of variables is the following:
$$\begin{pmatrix}
1& f&0 &0\\
0& 1& 0& 0\\
0& 0& 1& f\\
0&0&0&1
\end{pmatrix}.$$
The transition matrix is an upper triangular matrix with 1-s in the diagonal so it has determinant 1 (this means that these two quadratic forms are equivalent). In terms of these new variables the equation takes the following form:

\begin{equation*}
x^2+(d-f^2)y^2-az^2-a(d-f^2)w^2=0.
\end{equation*}

Finding a solution of this equation is polynomial-time equivalent to finding a zero divisor in the quaternion algebra $H_{\mathbb{Q}}(f^2-d,a)$ by Proposition \ref{split}.
\end{proof}

\begin{remark}
This statement can be interpreted constructively and as a complexity statement as well. First it provides a randomized polynomial-time algorithm (which is allowed to call an oracle for factoring integers) for finding such an element $v'$. It also says however, that finding such an element $v'$ is as hard as finding zero divisors in quaternion algebras over $\Q$. R\'onyai proved in \cite{Ronyai} that there is a randomized polynomial-time reduction from factoring integers to finding zero divisors in quaternion algebras over $\Q$. This implies that finding a quaternion subalgebra over $\mathbb{Q}$ containing $u$ is hard (otherwise one could easily find such an element $v'$). Also note that Simon's algorithm could also be applied to solving Equation \ref{notdiag} from which a suitable $v'$ can be constructed.
\end{remark}
\begin{remark}
Proposition \ref{anticommute} also provides the following result. Let $\B=H_{\Q(\sqrt{d})}(a,b+c\sqrt{d})$ where $a,b,c\in\Q$. Then $B$ contains a quaternion subalgebra over $\Q$ if and only if $H_{\mathbb{Q}}(b^2-cd^2,a)$ splits. The number $b^2-cd^2$ is the norm of $b+c\sqrt{d}$ in the extension $\Q(\sqrt{d})|\Q$. Actually $H_{\mathbb{Q}}(b^2-cd^2,a)$ is then the so-called corestriction of $\B$ \cite[Part II, Theorem 7]{Draxl}. It is known that if the corestriction of $\B$ splits then $\B$ contains a quaternion subalgebra over $\Q$, however, the usual proofs of this fact are not effective. For more details on the corestriction (or norm) of central simple algebras the reader is refferred to \cite{Draxl},\cite{Involutions}. 
\end{remark}



Finally putting Proposition \ref{square} and \ref{anticommute} together we obtain the following:
\begin{corollary}\label{quat}
Let $\A\cong M_2(\mathbb{Q}(\sqrt{d}))$ be given by structure constants. Then one can either find a zerod divisor in $\A$, or a four dimensional subalgebra over $\mathbb{Q}$ which is a quaternion algebra (and is split by $\mathbb{Q}(\sqrt{d})$) by a randomized polynomial-time algorithm which is allowed to call an oracle for factoring integers.
\end{corollary}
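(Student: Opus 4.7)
The plan is to chain the two preceding propositions and then verify directly that the resulting $\mathbb{Q}$-span is a quaternion $\mathbb{Q}$-subalgebra of $\A$ split by $\mathbb{Q}(\sqrt{d})$. First I would invoke Proposition~\ref{square} on $\A$ to obtain, in polynomial ff-time, a nonzero traceless element $u\in\A$ with $a:=u^2\in\mathbb{Q}$. Generically $a\neq 0$; should it vanish, $u$ would already be a rank-one zero divisor of $\A$, so via Remark~\ref{rank1} one could construct an explicit isomorphism $\A\cong M_2(\mathbb{Q}(\sqrt{d}))$ and transport the standard diagonal copy of $M_2(\mathbb{Q})$ back to $\A$. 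So I would assume $a\neq 0$ from here on.

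Next I would extend $u$ to a quaternion basis $1,u,w,uw$ of $\A$ over $\mathbb{Q}(\sqrt{d})$ via R\'onyai's algorithm~\cite{Ronyai}, writing $w^2=b+c\sqrt{d}$ with $b,c\in\mathbb{Q}$. If the lucky accident $c=0$ occurs I simply set $v:=w$; otherwise $\A$ is now presented in the form required by Proposition~\ref{anticommute}, whose constructive content produces a nonzero $v\in\A$ with $uv+vu=0$ and $v^2\in\mathbb{Q}$ by reducing the task to the explicit isomorphism problem for the rational quaternion algebra $H_{\mathbb{Q}}((b/c)^2-d,a)$, handled by the Ivanyos--Sz\'ant\'o ff-algorithm~\cite{IS}. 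The legitimacy of this reduction hinges on knowing that $H_{\mathbb{Q}}((b/c)^2-d,a)$ actually splits; this is forced because such a $v$ demonstrably exists inside $\A\cong M_2(\mathbb{Q}(\sqrt{d}))$: the anticommutator space of $u$ is $2$-dimensional over $\mathbb{Q}(\sqrt{d})$, so $4$-dimensional over $\mathbb{Q}$, and the single $\mathbb{Q}$-linear condition that the $\sqrt{d}$-coefficient of the square vanishes cuts out a nontrivial subspace.

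Finally, setting $\beta:=v^2\in\mathbb{Q}$ (nonzero, otherwise $v$ itself is already a zero divisor), I would put
\[
B \;:=\; \mathbb{Q}\cdot 1 \,\oplus\, \mathbb{Q}\cdot u \,\oplus\, \mathbb{Q}\cdot v \,\oplus\, \mathbb{Q}\cdot uv.
\]
Since $v$ is a $\mathbb{Q}(\sqrt{d})$-combination of $w$ and $uw$, the vectors $1,u,v,uv$ remain $\mathbb{Q}(\sqrt{d})$-linearly independent in $\A$, hence \emph{a fortiori} $\mathbb{Q}$-linearly independent, so $\dim_{\mathbb{Q}}B=4$. Closure under multiplication is immediate from $u^2=a$, $v^2=\beta$, $vu=-uv$ and $(uv)^2=-a\beta$, so $B\cong H_{\mathbb{Q}}(a,\beta)$ is a quaternion algebra. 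That $B$ is split by $\mathbb{Q}(\sqrt{d})$ is then forced by dimensions: $B\otimes_{\mathbb{Q}}\mathbb{Q}(\sqrt{d})$ is a $4$-dimensional simple $\mathbb{Q}(\sqrt{d})$-algebra mapping nontrivially, hence injectively, into $\A$, and the map must be an isomorphism onto $\A\cong M_2(\mathbb{Q}(\sqrt{d}))$. The main obstacle, as highlighted above, is justifying the splitting of the auxiliary rational quaternion algebra that appears in the second step; the remaining issues are degenerate-case bookkeeping, each branch of which either short-circuits the whole problem via a zero divisor or trivially simplifies the construction.
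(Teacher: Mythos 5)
Your architecture is the same as the paper's (Proposition~\ref{square}, then Proposition~\ref{anticommute}, then the tensor/dimension argument for splitness over $\mathbb{Q}(\sqrt{d})$), and you correctly isolate the real difficulty: one must know that the auxiliary algebra $H_{\mathbb{Q}}((b/c)^2-d,\,a)$ splits, i.e.\ that a nonzero $v$ with $uv+vu=0$ and $v^2\in\mathbb{Q}$ actually exists in $\A$, before the call to the algorithm of \cite{IRS} or \cite{IS} is guaranteed to return a zero divisor rather than a division-algebra certificate. But your justification of this point is not valid. The condition that the $\sqrt{d}$-coefficient of $v'^2$ vanishes is \emph{not} a $\mathbb{Q}$-linear condition on the four-dimensional $\mathbb{Q}$-space spanned by $v, \sqrt{d}\,v, uv, \sqrt{d}\,uv$: it is exactly the quadratic form of Proposition~\ref{anticommute}, $s_1^2+ds_2^2+2fs_1s_2-a(s_3^2+ds_4^2)-2afs_3s_4$, equivalently (after the triangular change of variables) the norm form $x^2+(d-f^2)y^2-az^2-a(d-f^2)w^2$ of the very quaternion algebra whose splitting is in question. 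A quadratic form in four variables over $\mathbb{Q}$ need not be isotropic, and its isotropy here is \emph{equivalent} to the splitting you want, so the ``dimension count'' is circular. Indeed, the remark following Proposition~\ref{anticommute} emphasizes that for a general quaternion algebra over $\mathbb{Q}(\sqrt{d})$ such a $v'$ may fail to exist, and finding it is as hard as splitting rational quaternion algebras; existence is forced only by the extra hypothesis $\A\cong M_2(\mathbb{Q}(\sqrt{d}))$, and that requires a genuine argument.

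The paper closes precisely this gap as follows: since $\A\cong M_2(\mathbb{Q}(\sqrt{d}))$, it contains a subalgebra $\A_0\cong M_2(\mathbb{Q})$; inside $\A_0$ there is an element $l_0$ with the same minimal polynomial $x^2-a$ as $u$, so by the Skolem--Noether-type theorem \cite[Theorem 2.1.]{Vigneras} one has $u=m^{-1}l_0m$ for some invertible $m\in\A$; choosing a nonzero $l_0'\in\A_0$ anticommuting with $l_0$ and setting $v=m^{-1}l_0'm$ gives an element with $uv+vu=0$ and $v^2=l_0'^2\in\mathbb{Q}$, whence the four-variable form is isotropic and the reduction of Proposition~\ref{anticommute} is guaranteed to succeed. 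You need this (or some equivalent existence argument) in your second step. The remainder of your proposal --- completing $u$ to a quaternion basis $1,u,w,uw$, the degenerate cases $u^2=0$ or $v^2=0$ (where a rank-one element short-circuits everything via Remark~\ref{rank1}), and the verification that $B\otimes_{\mathbb{Q}}\mathbb{Q}(\sqrt{d})\cong\A$ so that $B$ is split by $\mathbb{Q}(\sqrt{d})$ --- is correct and in line with the paper; only the existence argument is missing, and it is the heart of the corollary.
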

\begin{proof}
First we find a nonzero element $l$ such that $Tr(l)=0$ and $l^2\in\mathbb{Q}$ using the algorithm from Proposition \ref{square}. If $l^2=0$, then output $l$ as a zero divisor. If not, then we prove that there exists an element $l'$ such that $ll'+l'l=0$ and $l'^2\in\mathbb{Q}$.


If $l^2$ is a square in $\Q$, then such an $l'$ exists by Proposition \ref{anticommute}. Indeed let $l^2=c^2\in\Q$ and let $w$ be an element in $\A$ for which $wl=-lw$ and $w^2=e+f\sqrt{d}$. Then Proposition \ref{anticommute} asserts that a suitable $l'$ exists if and only if the quaternion algebra $H_{\Q}(d-\frac{e}{f}^2,c^2)$ splits. The quaternion algebra $H_{\Q}(d-\frac{e}{f}^2,c^2)$ does split since $c^2$ is a square in $\Q$ (thus $l-c$ is a zero divisor). 

From now on assume that $l^2$ is not a square in $\Q$. There exists a subalgebra $\A_0$ in $\A$ which is isomorphic to $M_2(\mathbb{Q})$. In this subalgebra there is an element $l_0$ for which $l$ and $l_0$ have the same minimal polynomial over $\mathbb{Q}(\sqrt{d})$. This means that there exists an $m\in \A$ such that $l=m^{-1}l_0m$ (\cite[Theorem 2.1.]{Vigneras}). There exists a nonzero $l_0'\in\A_0$ such that $l_0l_0'+l_0'l_0=0$. Let $l'=m^{-1}l_0'm$. We have that $l'^2=m^{-1}l_0'mm^{-1}l_0m=m^{-1}l_0^2m=l_0^2$, hence $l'^2\in\mathbb{Q}$. Since conjugation by $m$ is an automorphism we have that $ll'+l'l=m^{-1}(l_0l_0'+l_0'l_0)m=m^{-1}0m=0$. Thus we have proven the existence of a suitable element $l'$. Using the algorithm from Proposition \ref{anticommute} we can find an element $l'$ such that $ll'+l'l=0$ and $l'^2\in\mathbb{Q}$.

The $\mathbb{Q}$-subspace generated by $1,l,l',ll'$ is a quaternion algebra $H$ over $\mathbb{Q}$. Observe that $H\otimes\mathbb{Q}(\sqrt{d})$ has dimension 8 over $\mathbb{Q}$ and is naturally embedded into $M_2(\mathbb{Q}(\sqrt{d}))$. Hence it must be $M_2(\mathbb{Q}(\sqrt{d}))$, so $H$ is really split by $\mathbb{Q}(\sqrt{d})$.
\end{proof}

Let $\A\cong M_2(\mathbb{Q}(\sqrt{d}))$ be given by structure constants. At this point we are able construct a subalgebra $\B$ of $\A$ which is a quaternion algebra over $\Q$. This was also established in the extended abstract \cite{MACIS}. However, as $\B$ may be a division algebra, this seemingly does not help us in finding a zero divisor in $\A$.

The key observation missing from \cite{MACIS} is the following. Not every quaternion division algebra over $\Q$ can be obtained as a subalgebra of $M_2(\Q(\sqrt{d}))$, only those which are split by $\Q(\sqrt{d})$. The next theorem turns this observation into an algorithm for finding a zero divisor in $\A$:
\begin{theorem}\label{main}
Let $\A\cong M_2(\mathbb{Q}(\sqrt{d}))$ be given by structure constants. Then Algorithm \ref{alg} computes a zero divisor in $\A$. Algorithm \ref{alg} is randomized and runs in polynomial time if one is allowed to call an oracle for factoring integers.
\end{theorem}
\begin{proof}
First we construct a quaternion subalgebra $H$ over $\mathbb{Q}$ using Corollary \ref{quat}. If $H$ is isomorphic to $M_2(\mathbb{Q})$, then one can find a zero divisor in it by using the algorithm form \cite{IRS}. If not then there exists an element $s\in H$ such that $s^2=d$. Indeed, since $H$ is split by $\mathbb{Q}(\sqrt{d})$ and therefore contains $\mathbb{Q}(\sqrt{d})$ as a subfield \cite[Theorem 1.2.8]{Vigneras}. Let $1,u,v,uv$ be a quaternion basis with $u^2=a,v^2=b$. Every non-central element whose trace is zero (in $H$) is a $\mathbb{Q}$-linear combination of $u$, $v$ and $uv$. Hence finding an element $s$ such that $s^2=d$ is equivalent to solving the following equation:
\begin{equation}\label{last}
ax_1^2+bx_2^2-abx_3^2=d
\end{equation}

Since $H$ is a division algebra, the quadratic form $ax_1^2+bx_2^2-abx_3^2$ has no nontrivial zeros. Thus solving Equation \ref{last} is equivalent to finding a nontrivial zero of the quadratic form $ax_1^2+bx_2^2-abx_3^2-dx_4^2$. One can find such a zero using the algorithm from \cite{Simon}. This algorithm runs in polynomial time if one is allowed to call oracles for factoring integers. We have found an element $s$ in $H$ such that $s^2=d$. Since $H$ is a central simple algebra over $\mathbb{Q}$ and $d$ is not a square in $\mathbb{Q}$, the element $s$ is not in the center of $A$. Hence $s-\sqrt{d}$ is a zero divisor in $A$.
\end{proof}
\begin{remark}
An alternative ending of the algorithm could be the following. Assume that we have already found the subalgebra $H$. There always exists an element $s\in H$ for which $s^2=d$. We have seen this in the case where $H$ is a division algebra. If $H$ is a full matrix algebra then it is well-known. Hence the quadratic form $ax_1^2+bx_2^2-abx_3^2-dx_4^2$ is always isotropic. We find an isotropic vector $(x_1,x_2,x_3,x_4)$. If $x_4\neq 0$ we proceed as before. If $x_4=0$ then the norm of $x_1u+x_2v+x_3uv$ is 0, hence it is a zero divisor.
\end{remark}
\begin{remark}
We would like to note that Algorithm 1 only needs at most two oracle calls for integer factoring in Step 2 and 4. Furthermore, there  there are subexponential algorithms for integer factorization \cite{Fac1},\cite{Fac2} and Shor's algorithm can factor integers by a polynomial-time quantum algorithm \cite{Shor}. 
\end{remark}

First we would like to emphasize that our algorithm can be used to find nontrivial zeros of quadratic forms in three variables over $\mathbb{Q}(\sqrt{d})$ by Proposition \ref{split}. Moreover, Algorithm \ref{alg} is a reduction procedure in the following sense. The task of finding a nontrivial zero of a quadratic form in three variables over $\Q(\sqrt{d})$ can be accomplished by finding nontrivial zeros of quadratic forms in 3,4 and 6 variables over $\Q$. This reduction procedure works for any number field instead of $\Q(\sqrt{d})$. Therefore if someone finds an algorithm for finding nontrivial zeros of quadratic forms in 4 and 6 variables over $\Q(\sqrt{d})$, then one immediately has an algorithm for finding nontrivial zeros of quadratic forms in three variables over $\Q(\sqrt{d_1},\sqrt{d_2})$. The reduction procedure also works for fields of odd characteristic as demonstrated in \cite{Phd}. 

We conclude by considering the following question. The steps of Algorithm \ref{alg} perfectly make sense in the case when $\A$ is a division algebra. In that case the algorithm may fail at two points. Either it does not contain a quaternion subalgebra over $\Q$ or the subalgebra $\B$ which is a quaternion algebra over $\Q$ does not contain an element $s$ for which $s^2=d$. It is therefore natural to ask when the failure of the first type occurs, meaning the following. Assume that $\A$ contains a subalgebra $\B$ which is a quaternion algebra over $\Q$ but $\A$ is not necessarily a full matrix algebra. Does Algorithm \ref{alg} compute a quaternion subalgebra $\B$ over $\Q$? We now answer this question in the affirmative. We proceed by two facts considering the corestriction of central simple algebras. We do not define the corestriction here as it is slightly complicated and we only need certain properties of it. It is enough to note that the corestriction of a quaternion algebra over $\Q{\sqrt{d}}$ is a central simple algebra of degree 4 over $\Q$ (but as it turns out, Brauer equivalent to a quaternion algebra over $\Q$). For more details the reader is referred to \cite{Draxl},\cite{Involutions}. 

\begin{fact}\label{ARS}
Let $\Ha$ be a quaternion algebra over $\Q(\sqrt{d})$. Then $\Ha$ contains a subalgebra $\B$ which is a quaternion algebra over $\Q$ if and only if $Cor_{\Q(\sqrt{d})|\Q}(\Ha)$ (the corestriction of $\Ha$ with respect to the field extension $\Q(\sqrt{d})|\Q$) splits.
\end{fact}

The following fact is called the projection formula \cite[Part II, Theorem 7]{Draxl}:

\begin{fact}\label{projection}
Let $\Ha_{\Q(\sqrt{d})}(a,b+c\sqrt{d})$ be a quaternion algebra over $\Q(\sqrt{d})$ where $a,b,c\in\Q$. Then $Cor_{\Q(\sqrt{d})|\Q}(\Ha)$ is Brauer equivalent to $\Ha_{\Q}(a,b^2-c^2d)$.  
\end{fact}

\begin{proposition}\label{greedy}
Let $\Ha$ be a quaternion algebra over $\Q(\sqrt{d})$ which contains a quaternion subalgebra over $\Q$. Let $s\in\Ha$ such that $s^2\in\Q$. Then there exists an element $r$ such that $sr+rs=0$ and $r^2\in\Q$. 
\end{proposition}
\begin{remark}
Proposition \ref{greedy} implies that Algorithm \ref{alg} computes a quaternion subalgebra over $\Q$ even if $\Ha$ is division algebra containing a quaternion subalgebra over $\Q$. 
\end{remark}
\begin{proof}
Let $s^2=a$, where $a\in\Q$. Let $s'\in\Ha$ be such that $ss'+s's=0$ and $s'^2=b+c\sqrt{d}$. We have that $\Ha\cong\Ha_{\Q(\sqrt{d})}(a,b+c\sqrt{d})$. Proposition \ref{anticommute} says that a suitable $r$ exists if and only if $\Ha_{\Q}(a,b^2-c^2d)$ splits. So if we show that this is indeed the case then we are done. By Fact \ref{ARS} we have that $Cor_{\Q(\sqrt{d})|\Q}(\Ha)$ splits since $\Ha$ contains a quaternion subalgebra over $\Q$. By the projection formula (Fact \ref{projection}) we have that $Cor_{\Q(\sqrt{d})|\Q}(\Ha)$ is Brauer equivalent to $\Ha_{\Q}(a,b^2-c^2d)$, hence $\Ha_{\Q}(a,b^2-c^2d)$ splits. This proves the existence of a suitable element $r$.   
\end{proof}

Proposition \ref{greedy} also implies that Algorithm \ref{alg} can be used do decide if $\Ha$ contains a quaternion subalgebra over $\Q$ or not.

\paragraph*{Acknowledgement}
I would like to thank G\'abor Ivanyos and Lajos R\'onyai for their useful comments and their constant support. I am extremely grateful to the anonymous referees for the insightful remarks and suggestions. Research supported by the Hungarian National Research, Development and Innovation Office - NKFIH (Grant K115288).


\begin{thebibliography}{MM}

\bibitem{Berlekamp}
E.R. Berlekamp:
Factoring polynomials over finite fields;
Bell System Technical Journal 46 (1967), 1853-1859.
\bibitem{MAGMA}
W. Bosma, J. Cannon, C. Playoust: The Magma algebra system I: The user language; Journal of Symbolic Computation 24 (1997), 235-265.
\bibitem{Fac2}
J. P. Buhler, H. W. Lenstra, C. Pomerance : Factoring integers with the number field sieve; In "The development of the number field sieve"(1993); (50-94). Springer, Berlin, Heidelberg.
\bibitem{Castel}
P. Castel: Un algorithme de r\'esolution des \'equations quadratiques en dimension 5 sans factorisation, Phd thesis, October 2011. \url{https://tel.archives-ouvertes.fr/tel-00685260/document}
\bibitem{Cremona3}
J.E. Cremona, T.A. Fisher, C. O'neill, D. Simon, M. Stoll: Explicit $n$-descent on elliptic curves III. Algorithms;
Mathematics of Computation 84 (2015), 895-922.
\bibitem{CR}
J.E. Cremona, D. Rusin: Efficient solution of rational conics, Mathematics of Computation 72 (2003), 1417-1441.
\bibitem{Draxl}
P. K. Draxl: Skew Fields; Cambridge University Press, 1983.
\bibitem{GZ03}
M. Giesbrecht, Y. Zhang:
Factoring and decomposing Ore polynomials over $\F_q(T)$;
Proceedingss of the 2003 International Symposium on Symbolic and Algebraic
Computation (ISSAC2003), New York, NY, USA: ACM. 127-134.
\bibitem{GTLN}
J. G\'omez-Torrecillas, F. J. Lobillo, G. Navarro:
A New Perspective of Cyclicity in Convolutional Codes; IEEE Transactions on Information
Theory 62 (2016), 2702-2706.
\bibitem{de Graaf}
 W. A. de Graaf, M. Harrison, J. Pílniková, J. Schicho: A Lie algebra method for rational
parametrization of Severi-Brauer surfaces; Journal of Algebra 303 (2006), 514–529.
\bibitem{IKR}
G. Ivanyos, P. Kutas, L. R\'onyai: Computing explicit isomorphisms with full matrix algebras over $\mathbb{F}_q(x)$; Foundations of Computational Mathematics 18 (2018), 381-397.  
\bibitem{ILR}
G. Ivanyos, \'A. Lelkes, L. R\'onyai: Improved algorithms
for splitting full matrix algebras; JP Journal of Algebra, Number
Theory and Applications 28 (2013), 141-156.
\bibitem{IRS}
G. Ivanyos, L. R\'onyai, J. Schicho: Splitting full matrix algebras over algebraic number fields; Journal of Algebra 354 (2012), 211-223.

\bibitem{RLIR} G. Ivanyos, L. R\'onyai: On the complexity of finding maximal orders in semisimple algebras over $\Q$; Comput. complexity 3 (1993), 245-261.

\bibitem{IS}
G. Ivanyos, \'A. Sz\'ant\'o:  Lattice basis reduction for indefinite forms and an application; Discrete Mathemathics 153 (1996), 177-188. 

\bibitem{Involutions}
M-A. Knus, A. Merkurjev, M. Rost, J-P. Tignol: The book of involutions; American Mathematical Society Colloquium Publications, 44. American Mathematical Society, Providence, RI, 1998. 
\bibitem{MACIS}
P. Kutas: Some Results Concerning the Explicit Isomorphism Problem over Number Fields; International Conference on Mathematical Aspects of Computer and Information Sciences, Springer International Publishing (2015), 143-148.

\bibitem{Phd}
P. Kutas: The Explicit Isomorphism Problem; Central European University, Phd thesis, 2017. \url{http://www.etd.ceu.edu/2017/kutas_peter.pdf}
\bibitem{Fac1}
A. K. Lenstra, H. W. Lenstra Jr, M. S. Manasse, J. M. Pollard: The number field sieve; Proceedings of the twenty-second annual ACM symposium on Theory of computing (1990), 564-572.

\bibitem{Pilnikova}
J. P\'ilnikov\'a: Trivializing a central simple algebra of degree 4 over the rational numbers;
J. Symbolic Computation 42 (2007), 579-586.
\bibitem{Ronyai}
 L. R\'onyai: Simple algebras are difficult; Proc. of the 19th Annual ACM Symposium on the Theory of Computing, New York (1987), 398-408.
 \bibitem{Shor}
 P. W. Shor: Polynomial-time algorithms for prime factorization and discrete logarithms on a quantum computer; SIAM Review 41 (1999), 303-332.
\bibitem{Simon}
D. Simon: Quadratic equations in dimensions 4, 5 and more, preprint (2005). \url{http://web.archive.org/web/20061123185700/http://math.unicaen.fr/~simon/maths/Dim4.pdf}
\bibitem{Vigneras}
M-F. Vign\'eras: Arithm\'etique des Alg\`{e}bres de Quaternions;
Springer, LNM 800 (1980).
\end{thebibliography}
\end{document}